\newcommand{\N}{\ensuremath{\mathbb{N}}}
\newcommand{\R}{\ensuremath{\mathbb{R}}}
\newcommand{\tT}{\mathrm{T}}
\newcommand{\HH}{\mathcal{H}}
\newcommand{\HT}{{\mathcal{H}_{T}}}
\newcommand{\KK}{\mathcal{K}}
\newcommand*\pFq[6][8]{
  \begingroup % only local assignments
  \pFqmuskip=#1mu\relax
  % make the comma math active
 % \mathcode`\,=\string"8000
  % and define it to be \pFqcomma
  \begingroup\lccode`\~=`\,
  \lowercase{\endgroup\let~}\pFqcomma
  % typeset the formula
  {}_{#2}F_{#3}{\left(\genfrac..{0pt}{}{#4}{#5};#6\right)}%
  \endgroup
}
\newcommand*\pRegFq[6][8]{
  \begingroup % only local assignments
  \pFqmuskip=#1mu\relax
  % make the comma math active
 % \mathcode`\,=\string"8000
  % and define it to be \pFqcomma
  \begingroup\lccode`\~=`\,
  \lowercase{\endgroup\let~}\pFqcomma
  % typeset the formula
  {}_{#2}\tilde{F}_{#3}{\left(\genfrac..{0pt}{}{#4}{#5};#6\right)}%
  \endgroup
}
\newcommand{\pFqcomma}{\mskip\pFqmuskip}
\DeclareMathOperator*{\argmin}{argmin}
\def\prox{\mathrm{prox}}
\def\Prox{\mathrm{Prox}}
\newtheorem{thm}{Theorem}[section]
\newtheorem{example}[thm]{Example}
\newtheorem{corollary}[thm]{Corollary}
\newtheorem{proposition}[thm]{Proposition}
\begin{document}
\title{Frame Soft Shrinkage as Proximity Operator 
}
%On the Interplay between Proximity and Linear Operators in Hilbert Spaces

\author{
Marzieh Hasannasab\footnotemark[1]
\and
Sebastian Neumayer\footnotemark[1]
	\and
	Gerlind Plonka\footnotemark[2]
	\and
	Simon Setzer\footnotemark[3]
	\and
	Gabriele Steidl\footnotemark[1]
	\and
	Jakob Geppert\footnotemark[2]
}

\maketitle

\footnotetext[1]{Department of Mathematics,
	Technische Universit\"at Kaiserslautern,
	Paul-Ehrlich-Str.~31, D-67663 Kaiserslautern, Germany,
	\{name\}@mathematik.uni-kl.de.}
	
	\footnotetext[2]{Institute for Numerical and Applied Mathematics,
	G\"ottingen University, Lotzestr.\ 16-18, 37083 G\"ottingen, Germany,
	\{name\}@math.uni-goettingen.de.
	}
	
	\footnotetext[3]{simon.setzer@gmail.com.}

\begin{abstract}
Let $\HH$ and $\KK$ be real Hilbert spaces 
and $T \in \mathcal{B} (\HH,\KK)$  an injective operator with closed range and Moore-Penrose inverse $T^\dagger$.
Based on the well-known characterization of proximity operators by Moreau, 
we prove that for any proximity operator $\Prox \colon \KK \to \KK$ the operator $T^\dagger \, \Prox \, T$ is a proximity operator on the linear space $\HH$ equipped with a suitable norm.
In particular, it follows for the frequently applied soft shrinkage operator $\Prox = S_{\lambda}\colon \ell_2 \rightarrow \ell_2$ and any frame analysis operator $T\colon \HH \to \ell_2$, 
that the frame shrinkage operator $T^\dagger\, S_\lambda\, T$ is a proximity operator in a suitable Hilbert space.
\end{abstract}

\section{Introduction}

Wavelet and frame shrinkage operators became very popular in recent years.
A certain starting point was the ISTA algorithm in \cite{DDM04} which was interpreted as a special case of
the forward-backward algorithm  in \cite{CW05}. 
For relations with other algorithms see also \cite{BSS2017,Se09}.
Let  $T \in  \mathbb R^{n \times d}$, $n\ge d$ have full column rank.
Then, the setting
\begin{equation} \label{eq:problem}
\argmin_{y \in \mathbb R^d} \Bigl\{ \frac12 \|x -y \|_2^2 + \lambda \|Tx\|_{1} \Bigr\}, \quad \lambda >0 
\end{equation}
is known as analysis point of view, while
the ,,opposite'' case  $T \in \mathbb R^{n \times d}$ with $n \le d$ can be seen as synthesis approach, see \cite{EMR07}.
For orthogonal $T \in {\mathbb R}^{d \times d}$, the frame soft shrinkage operator 
$T^\dagger \, S_\lambda \, T  = T^\tT \, S_\lambda \, T$ is the solution  of the above problem.
In the synthesis case, with the additional assumption $T T^\tT = I_n$, the solution of problem \eqref{eq:problem} is given by
$(I_d -  T^\tT T) x + T^\tT S_\lambda T$,
see \cite[Theorem 6.15]{Beck17}.
For arbitrary $T \in {\mathbb R}^{n \times d}$, $n \ge d$, 
there do not exist analytic expressions for the solution of \eqref{eq:problem}
in the literature.

Recently, some authors of this paper considered the question, 
whether the frame soft shrinkage operator can itself be seen as a proximity  operator, see  \cite{GP2019}.
They showed that the set-valued operator $(T^\dagger S_\lambda T)^{-1} - I_d$
is maximally cyclically monotone, which implies that it is a proximity operator with respect to some norm in
$\mathbb R^d$.
In this short paper, we prove that 
for any injective operator $T\in \mathcal{B} (\HH,\KK)$ with closed range and
any proximity operator $\Prox\colon \KK \to \KK$, 
the new operator $T^\dagger \, \Prox \, T\colon \HH \rightarrow \HH$ 
is also a proximity operator on the linear space $\HH$, 
but equipped with another inner product. This includes the above mentioned finite dimensional setting as a special case.
In contrast to \cite{GP2019}, we directly approach the question using a classical result of Moreau \cite{Moreau65}. 
Moreover, we explicitly provide the function for the definition of the proximity operator.

There are several fields where our results may be of interest.
So different norms in the definition of the proximity  operator were successfully used in variable metric algorithms, see \cite{CPR2013}.
Recently, it was shown that many activation functions appearing in neural networks are indeed proximity functions \cite{CP2018}.
Here we concatenate proximity operators with a linear operators and stay within the set of proximity operators just in different Hilbert spaces.
Finally, our findings may be of interest in connection with so-called Plug-and-Play algorithms \cite{CWE2016,SVW2016,TBF2018}.

Our paper is organized as follows: 
We start with preliminaries on convex analysis in Hilbert spaces in Section \ref{sec:prelim}.
In Section \ref{sec:interplay}, we prove our general results on the interplay between proximity and certain linear operators.
As a special case we emphasize that the frame soft shrinkage operator is itself a proximity operator.

%-------------------------------------------------------------------------------------
\section{Preliminaries} \label{sec:prelim}
Let $\HH$ be a real Hilbert space with inner product $\langle \cdot,\cdot \rangle$ and norm $\| \cdot \|$. 
By  $\Gamma_0(\HH)$ we denote the set of proper, convex, lower semi-continuous functions on $\HH$ mapping into $(-\infty,  \infty]$. %$\mathbb R$.
For $f \in \Gamma_0(\HH)$ and $\lambda > 0$, consider the proximity operator $\prox_{\lambda f}\colon \HH \rightarrow \HH$ and its Moreau envelope
$M_{\lambda f}\colon \HH \rightarrow \R$ defined by
\begin{align}
\prox_{\lambda f} (x) &\coloneqq \argmin_{y \in \HH} \Bigl\{ \frac12 \|x-y\|^2 + \lambda f(y) \Bigr\}, \\
M_{\lambda f} (x) &\coloneqq \min_{y \in \HH} \Bigl\{ \frac12 \|x-y\|^2 + \lambda f(y) \Bigr\}.
\end{align}
Clearly, the proximity operator and its Moreau envelope depend on the underlying space $\HH$, in particular on the chosen inner product.
Recall that an operator $G\colon \HH \to \HH$ is called firmly nonexpansive if for all $x,y \in \HH$ the following relation
is fulfilled
$$
\|Gx -Gy\|^2 \le \bigl\langle x-y,Gx-Gy \bigr\rangle .
$$
Obviously, firmly nonexpansive operators are nonexpansive.

For a Fr\'echet differentiable functions $\Phi\colon \HH \to \R$, 
the gradient $\nabla \Phi(x)$ at $x \in \HH$ is defined as the vector satisfying for all $h \in \HH$,
\[\langle \nabla \Phi(x), h \rangle = D\Phi(x) h,\]
where $D\Phi\colon \HH\to \mathcal{B} (\HH,\R)$ denotes the Fr\'echet derivative of $\Phi$,
i.e., for all $x,h \in H$,
\begin{equation} \label{frechet}
\Phi(x+h) - \Phi(x) = D\Phi(x) h + o(\|h\|).
\end{equation}
Note that the gradient crucially depends on the chosen inner product in $\HH$.
The following results can be found, e.g., in \cite[Props.~12.27, 12.29]{BC11}. %,\cite[Thms.~6.42 and 6.60]{Beck17}, \cite{Ro97}.

\begin{thm} \label{lem:1}
Let $f \in \Gamma_0(\HH)$. Then the following holds true:
\\
i) The operator $\prox_{\lambda f} \colon \HH \to \HH$ is firmly  nonexpansive.
\\
ii) The function $M_{\lambda f}$ is  (Fr\'echet) differentiable and has a
Lipschitz-continuous gradient given by
\[\nabla  M_{\lambda f}(x) = x - \prox_{\lambda f}(x).\]
\end{thm}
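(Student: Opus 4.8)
The plan is to prove part~i) first and then use it, together with convexity of $f$, to obtain part~ii). For i), I would start from the first-order optimality condition for the strongly convex objective $\frac12\norm{x-\cdot}^2 + \lambda f$. Since $f \in \Gamma_0(\HH)$, this objective is proper, lower semi-continuous and strongly convex, so the minimizer exists and is unique, $\prox_{\lambda f}$ is well defined and single-valued, and Fermat's rule gives the equivalence $p = \prox_{\lambda f}(x) \iff x - p \in \lambda\,\partial f(p)$. Writing $p = \prox_{\lambda f}(x)$ and $q = \prox_{\lambda f}(y)$, I would feed the two inclusions $\tfrac1\lambda(x-p)\in\partial f(p)$ and $\tfrac1\lambda(y-q)\in\partial f(q)$ into the monotonicity inequality of the subdifferential, $\langle (x-p)-(y-q),\,p-q\rangle\ge 0$. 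Rearranging this yields exactly $\norm{p-q}^2 \le \langle x-y,\,p-q\rangle$, which is the asserted firm nonexpansiveness. (Equivalently, one may use Moreau's variational characterization of $\prox$ and add the two resulting inequalities, avoiding subdifferential notation entirely.)

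For ii), I would derive the identity for the gradient and Fréchet differentiability at once from a two-sided estimate. Fix $x,h\in\HH$ and set $p=\prox_{\lambda f}(x)$, $p_h=\prox_{\lambda f}(x+h)$. Inserting the suboptimal point $p$ into the minimization defining $M_{\lambda f}(x+h)$ and expanding $\norm{x+h-p}^2$ gives the upper bound $M_{\lambda f}(x+h)-M_{\lambda f}(x)\le \langle x-p,\,h\rangle + \tfrac12\norm{h}^2$. Symmetrically, inserting $p_h$ into the minimization defining $M_{\lambda f}(x)$ produces a lower bound involving $\langle x-p_h,\,h\rangle$; here I would invoke the nonexpansiveness of $\prox_{\lambda f}$ from part~i), i.e. $\norm{p_h-p}\le\norm{h}$, to replace $p_h$ by $p$ at the cost of an $O(\norm{h}^2)$ term. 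Combining the two bounds yields $\abs{M_{\lambda f}(x+h)-M_{\lambda f}(x)-\langle x-p,\,h\rangle}\le\norm{h}^2$, and since $\norm{h}^2 = o(\norm{h})$ this is precisely Fréchet differentiability with $\nabla M_{\lambda f}(x)=x-\prox_{\lambda f}(x)$. I expect this lower bound to be the only delicate step: unlike the upper bound it does not close by a single insertion of a suboptimal point, because it naturally produces $\langle x-p_h,\,h\rangle$ rather than $\langle x-p,\,h\rangle$, so one must absorb the discrepancy $\langle p-p_h,\,h\rangle$ through the Lipschitz estimate of part~i).

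For the Lipschitz continuity of the gradient I would use the representation $\nabla M_{\lambda f}=I-\prox_{\lambda f}$ just obtained and show that the complement of a firmly nonexpansive operator is again firmly nonexpansive. Indeed, expanding $\norm{(I-G)x-(I-G)y}^2 - \langle (I-G)x-(I-G)y,\,x-y\rangle$ collapses, after cancellation, to $\norm{Gx-Gy}^2-\langle Gx-Gy,\,x-y\rangle$, which is $\le 0$ by firm nonexpansiveness of $G$. Applying this with $G=\prox_{\lambda f}$ shows that $\nabla M_{\lambda f}$ is firmly nonexpansive, hence nonexpansive, so the gradient is Lipschitz continuous with constant $1$. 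Everything beyond the lower bound in ii) — the optimality condition, subdifferential monotonicity, and the algebraic expansions — is then routine.
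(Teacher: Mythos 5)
Your proof is correct. Note that the paper does not prove this statement at all; it simply cites \cite[Props.~12.27, 12.29]{BC11}, and your argument (Fermat's rule plus monotonicity of $\partial f$ for i); the two-sided estimate with the $\langle p - p_h, h\rangle$ discrepancy absorbed via nonexpansiveness for ii); and the fact that $I - G$ is firmly nonexpansive whenever $G$ is, for the Lipschitz bound) is essentially the standard proof given in that reference, with all the genuinely delicate steps, in particular the lower bound in ii), handled correctly.
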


Clearly, ii) implies that
\begin{equation}\label{proxi}
\prox_{\lambda f} (x) = \nabla \left( \frac12 \| \cdot \|^2 - M_{\lambda f}(x)  \right) = \nabla \Phi(x),
\end{equation}
where $\Phi \coloneqq \frac12 \|x\|^2 - M_{\lambda f}$ is convex since $\prox_{\lambda f}$ is nonexpansive \cite[Prop.~17.10]{BC11}.
Further, it was shown by Moreau that also the %reverse 
following (reverse) statement holds true \cite[Cor.~10c]{Moreau65}.

\begin{thm} \label{thm:1}
 The operator $\Prox\colon  \HH \rightarrow \HH$ is a proximity operator 
 if and only if it is nonexpansive and there exists a function $\Psi \in \Gamma_0(\HH)$, 
 such that for any $x \in \HH$ we have $\Prox(x) \in \partial \Psi(x)$, where $\partial \Psi$ denotes the subdifferential of $\Psi$
\end{thm}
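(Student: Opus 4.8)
The plan is to prove the two implications separately, leaning on Theorem~\ref{lem:1} and the representation \eqref{proxi} for the forward direction, and on classical monotone-operator theory for the converse. Throughout, ``proximity operator'' means $\Prox=\prox_{\lambda f}$ for some $f\in\Gamma_0(\HH)$ and $\lambda>0$.

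For the direction ``$\Rightarrow$'' the preliminaries already do the work. Theorem~\ref{lem:1}~i) gives that $\prox_{\lambda f}$ is firmly nonexpansive, hence nonexpansive, and by \eqref{proxi} we have $\Prox=\nabla\Phi$ with $\Phi\coloneqq\frac12\|\cdot\|^2-M_{\lambda f}$ convex and Fr\'echet differentiable. Since a convex differentiable function satisfies $\partial\Phi(x)=\{\nabla\Phi(x)\}$ and $\Phi$ is finite, continuous and convex, hence $\Phi\in\Gamma_0(\HH)$, the choice $\Psi\coloneqq\Phi$ yields $\Prox(x)=\nabla\Phi(x)\in\partial\Psi(x)$ for every $x$. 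This part is essentially bookkeeping.

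The substantive direction is ``$\Leftarrow$''. First I would show that the selection property $\Prox(x)\in\partial\Psi(x)$ forces $\Prox$ to be cyclically monotone: summing the subgradient inequalities $\Psi(x_{i+1})\ge\Psi(x_i)+\langle\Prox(x_i),x_{i+1}-x_i\rangle$ around a cycle $x_0,\dots,x_N=x_0$ makes the $\Psi$-terms telescope to zero, leaving $\sum_i\langle\Prox(x_i),x_{i+1}-x_i\rangle\le0$. Since $\Prox$ is nonexpansive it is continuous and everywhere defined, hence maximally monotone; a maximally cyclically monotone operator is, by Rockafellar's theorem, the subdifferential of some $\Phi\in\Gamma_0(\HH)$, and single-valuedness upgrades this to $\Prox=\nabla\Phi$ with $\Phi$ convex and differentiable \cite{BC11}.

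It then remains to turn $\Phi$ into a Moreau envelope, and here lies the main obstacle. The key observation is that, being a nonexpansive gradient of a convex function, $\Prox=\nabla\Phi$ is in fact firmly nonexpansive by the Baillon--Haddad theorem \cite{BC11}; consequently $\mathrm{Id}-\Prox=\nabla g$ with $g\coloneqq\frac12\|\cdot\|^2-\Phi$ is firmly nonexpansive as well, so $g\in\Gamma_0(\HH)$. I would then set $f\coloneqq\bigl(g^*-\tfrac12\|\cdot\|^2\bigr)^*$ and verify, using that the firmly nonexpansive operator $\mathrm{Id}-\Prox$ is the resolvent $(\mathrm{Id}+M)^{-1}$ of the maximal monotone operator $M=(\mathrm{Id}-\Prox)^{-1}-\mathrm{Id}$, that $g^*-\frac12\|\cdot\|^2\in\Gamma_0(\HH)$; conjugating the infimal-convolution identity $M_f=f\,\square\,\frac12\|\cdot\|^2$, i.e. $(M_f)^*=f^*+\frac12\|\cdot\|^2$, is exactly what makes this inversion close up and gives $M_f=g$. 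Finally $\nabla M_f=\mathrm{Id}-\prox_f$ together with $\nabla M_f=\nabla g=\mathrm{Id}-\Prox$ yields $\prox_f=\Prox$. The delicate point throughout is establishing that the candidate $f$ obtained by conjugation is proper, convex and lower semicontinuous, which is precisely where firm nonexpansiveness (rather than mere nonexpansiveness) and the resolvent characterization are indispensable.
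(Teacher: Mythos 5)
Your proposal is correct, but be aware that the paper itself offers no proof of Theorem~\ref{thm:1}: it is quoted verbatim as a classical result of Moreau \cite{Moreau65}, so there is nothing in the paper to compare against line by line. What you have written is a self-contained modern reconstruction, and it holds up. The forward implication via Theorem~\ref{lem:1} and \eqref{proxi} is exactly the bookkeeping the paper records around the theorem. For the converse, your chain --- cyclic monotonicity from telescoping the subgradient inequalities, maximal monotonicity of $\Prox$ from monotonicity plus continuity and full domain, Rockafellar's theorem to obtain $\Prox=\nabla\Phi$, Baillon--Haddad to upgrade nonexpansiveness of a convex gradient to firm nonexpansiveness, and the conjugation identity $(f\,\square\,\tfrac12\|\cdot\|^2)^*=f^*+\tfrac12\|\cdot\|^2$ to manufacture $f$ --- is sound. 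Two minor points deserve a sentence each if you write this out in full: first, you pass from ``maximally monotone'' to invoking the statement for \emph{maximally cyclically} monotone operators; the bridge is that a maximally monotone operator which is cyclically monotone is automatically maximally cyclically monotone, since any cyclically monotone extension is in particular a monotone extension. Second, the properness, convexity and lower semicontinuity of $g^*-\tfrac12\|\cdot\|^2$, which you rightly flag as the delicate step, does go through: convexity is equivalent to $1$-strong convexity of $g^*$, hence to nonexpansiveness of $\nabla g=\mathrm{Id}-\Prox$, and properness and lower semicontinuity follow because $g^*$ is proper and lower semicontinuous while $\tfrac12\|\cdot\|^2$ is finite and continuous. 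You could also streamline the argument: since $\Prox\subseteq\partial\Psi$ and $\Prox$ is maximally monotone, in fact $\Prox=\partial\Psi$, so $\Psi$ itself is differentiable with $\nabla\Psi=\Prox$ and the second appeal to Rockafellar is unnecessary; similarly one may take $f=\Phi^*-\tfrac12\|\cdot\|^2$ directly instead of detouring through $g=\tfrac12\|\cdot\|^2-\Phi$ and the Moreau envelope of $f$.
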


Thanks to \eqref{proxi}, we conclude that $\Prox\colon  \HH \rightarrow \HH$ is a proximity operator 
if and only if it is nonexpansive and the gradient of a convex, differentiable function $\Phi \colon \HH \to \R$.
Note that recently, the characterization of Bregman proximity operators in a more general setting was discussed in \cite{GN2018}.
In the following example, we recall the Moreau envelope and the proximity operator related to the soft thresholding operator.

\begin{example} \label{ex:1}
	Let $\HH = \R$ with usual norm $|x|$ and $f(x) \coloneqq |x|$.
	Then, $\prox_{\lambda f}$ is the soft thresholding operator $S_\lambda$ defined by
	\[S_\lambda(x)\coloneqq 	\left\{
	\begin{array}{cl}
	x - \lambda& \mathrm{for} \; x > \lambda,\\
	0         & \mathrm{for} \; x \in [-\lambda,\lambda],\\
	x + \lambda& \mathrm{for} \; x < -\lambda,
	\end{array}
	\right.\]
	and $m_{\lambda | \cdot|}$ is the Huber function
	\[
	m_{\lambda | \cdot|} (x)
	=
	\left\{
	\begin{array}{cl}
	x - \frac{\lambda}{2} & \mathrm{for} \; x > \lambda,\\[0.5ex]
	\frac{1}{2\lambda} x^2         & \mathrm{for} \; x \in [-\lambda,\lambda],\\[0.5ex]
	- x - \frac{\lambda}{2}& \mathrm{for} \; x < -\lambda.
	\end{array}
	\right.\]
	Hence, $\prox_{\lambda f} = \nabla \varphi$, where $\varphi(x)  = \frac{x^{2}}{2} - m_{\lambda | \cdot|}(x)$, i.e., 
	\[\varphi(x) = 
	\left\{\begin{array}{cl}
	\tfrac12(x-\lambda)^2& \mathrm{for} \; x > \lambda,\\
	0 &\mathrm{for} \; x \in [-\lambda,\lambda],\\[0.5ex]
	\tfrac12 (x + \lambda)^2& \mathrm{for} \; x < -\lambda.
	\end{array}
	\right.\]
	For $\HH = \mathbb R^d$ and $f(x) \coloneqq \|x\|_1$, we can just use a componentwise approach.
	Then $S_\lambda$ is defined componentwise, the Moreau envelope reads as
	$M_{\lambda \| \cdot \|_1} (x) = \sum_{i=1}^d m_{\lambda | \cdot|}(x_i)$ 
	and the potential of $\prox_{\lambda \| \cdot \|_1}$ is $\Phi(x) = \sum_{i=1}^d \varphi(x_i)$.
\end{example}

%-----------------------------------------
\section{On the interplay between proximity and linear operators} \label{sec:interplay}

Let $\HH$ and $\KK$ be real Hilbert spaces with inner products $\langle \cdot,\cdot \rangle_\HH$ and  $\langle \cdot,\cdot \rangle_\KK$ and corresponding norms
$\| \cdot \|_\HH$ and $\| \cdot \|_\KK$, respectively.
By $\mathcal{B}(\HH,\KK)$ we denote the space of bounded, linear operators from $\HH$ to $\KK$ with domain $\HH$.
In this section, we show that for any injective operator $T \in \mathcal{B}(\HH,\KK)$ with closed range $\mathcal{R}(T)$
and any proximity operator $\Prox \colon \KK \to \KK$, the operator $T^{\dagger} \, \Prox \, T\colon \HH \rightarrow \HH$ 
is itself a proximity operator on the linear space $\HH$ equipped with a suitable (equivalent) norm $\| \cdot \|_{\HT}$, i.e., there exits a function $f \in \Gamma_0(\HH)$ such that
\[
T^{\dagger} \, \Prox \, T (x) = \argmin_{y \in \HH} \Bigl\{ \frac12 \|x-y\|_\HT^2 + f(y) \Bigr\}.
\]
We prove our main result in the next subsection and consider the important case
of frame soft shrinkage subsequently.

\subsection{Main Result} \label{sec:main}
For any injective $T \in \mathcal{B}(\HH,\KK)$ with closed range, the Moore-Penrose inverse (generalized inverse, pseudo-inverse) $T^\dagger \in \mathcal{B}(\KK,\HH)$
is explicitly given by
$$T^{\dagger} = (T^*T)^{-1} T^*.$$
It satisfies
$$T^{\dagger} T = \text{Id}, \quad T T^{\dagger} = P_{\mathcal{R}(T)},\quad T^\dagger = T^\dagger P_{\mathcal{R}(T)},$$
where  $P_{\mathcal{R}(T)}$ is the orthogonal projection onto $\mathcal{R}(T)$,   see \cite[Ex.~3.27 \& Prop.~3.28]{BC11}.
%Note that $T^\dagger$ is indeed the synthesis operator for the canonical dual frame of $\{ f_k\}_{k\in\N}$.

Every injective $T\in \mathcal{B}(\HH,\KK)$ gives rise to an inner product in $\HH$ via $\langle x, y \rangle_\HT = \langle Tx, Ty \rangle_\HH$ 
and corresponding norm $\|x\|_\HT=\| Tx\|_\HH$. 
In general this defines a pre-Hilbert space.
Since $T$ has additionally closed range, the norms $\| \cdot\|_\HH$ and $\| \cdot\|_\HT$ are equivalent on $\HH$ due to
\[\Vert T\Vert_{\mathcal{B}(\HH,\KK)}^{-1} \| x\|_\HT \leq \| x\|_\HH = \Vert T^{\dagger} T x \Vert_\HH \leq \Vert T^{\dagger}\Vert_{\mathcal{B}(\KK,\HH)} \Vert x \Vert_\HT\]
for all $x \in \HH$. The norm equivalence also ensures the completeness of $\HH$ equipped with the new norm.
To emphasize that we consider the linear space $\HH$ with this norm we write $\HT$.

For a Fr\'echet differentiable function $\Phi \colon \HH \to \R$, the gradient $\nabla_\HT \Phi(x)$ at $x \in \HH$ in the space $\HT$ is given by the vector satisfying
\[\langle \nabla_\HT \Phi(x), h \rangle_\HT = D\Phi(x) h = \langle \nabla_\HH \Phi(x), h \rangle_\HH\]
for all $h \in \HH$.
Hence, the gradient depends on the chosen norm through
$$
\nabla_\HT \, \Phi(x) = (T^* T )^{-1}\nabla_\HH \Phi(x).
$$
Now, the desired result follows from the next theorem.

\begin{thm}\label{thm:Existence}
Let $T \in \mathcal{B}(\HH,\KK)$ be injective with closed range and $\Prox \colon \KK \to \KK$
a proximity operator on $\KK$.
Then, the operator $T^\dagger \, \Prox \, T \colon \HT \to \HT$ is a  proximity operator.
\end{thm}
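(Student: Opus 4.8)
The plan is to verify the two conditions of Moreau's characterization (Theorem~\ref{thm:1}), in the sharpened form recorded in~\eqref{proxi}: I will show that $G \coloneqq T^\dagger \, \Prox \, T$ is nonexpansive on $\HT$ and that it is the $\HT$-gradient of a convex, Fr\'echet differentiable function $\Phi\colon \HH \to \R$. Since $\HT$ is a genuine Hilbert space by the norm-equivalence and completeness argument established above, Theorem~\ref{thm:1} applies verbatim with $\HH$ replaced by $\HT$.

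First I would establish nonexpansiveness in the new norm. Using $\|x\|_\HT = \|Tx\|_\KK$ together with $TG = T T^\dagger \, \Prox \, T = P_{\mathcal{R}(T)} \, \Prox \, T$, one computes for all $x,y \in \HH$
\[
\|Gx - Gy\|_\HT = \|T(Gx-Gy)\|_\KK = \|P_{\mathcal{R}(T)}\bigl(\Prox(Tx) - \Prox(Ty)\bigr)\|_\KK.
\]
Since orthogonal projections are nonexpansive and $\Prox$ is nonexpansive on $\KK$ (being a proximity operator), the right-hand side is bounded by $\|Tx - Ty\|_\KK = \|x-y\|_\HT$, which is the claim.

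Next I would exhibit the potential. Because $\Prox$ is a proximity operator on $\KK$, the characterization following~\eqref{proxi} furnishes a convex, differentiable $\Phi_\KK\colon \KK \to \R$ with $\nabla_\KK \Phi_\KK = \Prox$. The natural candidate is $\Phi \coloneqq \Phi_\KK \circ T$, which is convex and Fr\'echet differentiable as the composition of a convex, differentiable function with the bounded linear operator $T$. By the chain rule its $\HH$-gradient is $\nabla_\HH \Phi(x) = T^* \nabla_\KK \Phi_\KK(Tx) = T^* \Prox(Tx)$, and the gradient transformation formula $\nabla_\HT \Phi = (T^*T)^{-1}\nabla_\HH \Phi$ recorded above then yields
\[
\nabla_\HT \Phi(x) = (T^*T)^{-1} T^* \Prox(Tx) = T^\dagger \, \Prox \, T(x) = Gx,
\]
so $G$ is indeed the $\HT$-gradient of the convex, differentiable function $\Phi$.

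Combining both steps with Theorem~\ref{thm:1} (via~\eqref{proxi}) shows that $G = T^\dagger \, \Prox \, T$ is a proximity operator on $\HT$. The one genuine subtlety -- and the step I would treat most carefully -- is the change of inner product: one must consistently read off the gradient with respect to $\langle \cdot, \cdot\rangle_\HT$, and the decisive observation is that the factor $(T^*T)^{-1}$ produced by the metric change is exactly what turns $T^*$ into the Moore-Penrose inverse $T^\dagger = (T^*T)^{-1}T^*$. Everything else reduces to the nonexpansiveness of orthogonal projections and of $\Prox$, so I do not anticipate any further obstacle.
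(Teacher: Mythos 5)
Your proposal is correct and follows essentially the same route as the paper: verify Moreau's characterization on $\HT$ by establishing nonexpansiveness and exhibiting the potential $\Phi_\KK \circ T$, whose $\HT$-gradient is $(T^*T)^{-1}T^*\,\Prox\,T = T^\dagger\,\Prox\,T$. The only (harmless) difference is that you prove plain nonexpansiveness directly via the projection $P_{\mathcal{R}(T)}$, whereas the paper establishes the stronger firm nonexpansiveness; both suffice for Theorem~\ref{thm:1}.
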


\begin{proof}
In view of Theorems \ref{lem:1} and \ref{thm:1}, 
it suffices to show that $T^\dagger \, \Prox \, T$
is nonexpansive and that there exists a convex function $\Psi \colon \HT \to \R$ with  $T^\dagger \, \Prox \, T  = \nabla_\HT \Psi$.

1.~First, we show that $T^\dagger  \, \Prox \, T$ is firmly nonexpansive, and thus nonexpansive.
Using that  $T T^\dagger = P_{\mathcal{R}(T)}$, it follows
\begin{align}
\Vert T^\dagger  \, \Prox \,  T x - T^\dagger  \, \Prox \,  T y \Vert_\HT^2 
= \Vert  T T^\dagger \left(  \, \Prox \,  T x -  \Prox \,  T y \right)\Vert_\KK^2
\leq \Vert \Prox \, T x -   \Prox \,  T y \Vert_\KK^2. \label{xx}
\end{align}
Now, we obtain
\begin{align}
\bigl \langle  T^\dagger \, \Prox \, T x - T^\dagger \, \Prox \, T y, x-y \bigr \rangle_\HT
&= 
\bigl\langle T T^\dagger \bigl( \Prox \, T x - \Prox \, T y \bigr), Tx - Ty \bigr\rangle_\KK\\ 
&=  
\bigl\langle T^*T T^\dagger \bigl( \Prox \, T x - \Prox \, T y \bigr), x - y \bigr\rangle_\HH\\
&=
\bigl\langle \Prox \, T x - \Prox \, T y, Tx - Ty \bigr\rangle_\KK,
\end{align}
and since $\Prox$ is  firmly nonexpansive with respect to $\|\cdot\|_{\KK}$ and by \eqref{xx} further
\begin{align}
\bigl \langle  T^\dagger \, \Prox \, T x - T^\dagger \, \Prox \, T y, x-y \bigr \rangle_\HT
&\geq 
\Vert \Prox \, T x - \Prox \, T y \Vert^2_\KK\\
&
\geq 
\Vert T^\dagger \, \Prox \, T x - T^\dagger \, \Prox \, T y \Vert_\HT^2.
\end{align}
Thus, $T^{\dagger} \, \Prox \, T$ is firmly nonexpansive.

2.~It remains to prove that there exists a convex function $\Psi\colon \HT \to \R$ with
$\nabla_\HT \Psi = T^{\dagger} \, \Prox \, T$.
Since $\Prox$ is a proximity operator, there exists $\Phi \colon \HH \to\R$ with $\Prox = \nabla_\KK \Phi$.
Then, a natural candidate is given by $\Psi =\Phi \, T$.
Using the definition of the gradient and the chain rule, it holds for all $x,h\in \HH$ that
\begin{align}
\langle \nabla_\HH \Psi(x) , h \rangle_\HH =  
D\Psi (x)h = D\Phi(Tx)\,Th = 
\langle \nabla_\KK \Phi(Tx) , Th \rangle_\KK =   \langle T^* \, \Prox \, Tx , h \rangle_\HH.
\end{align}
Hence, we conclude $ \nabla_\HT \Psi  =  (T^* T )^{-1}\nabla_\HH \Psi(x) = (T^* T )^{-1} T^* \, \Prox \, Tx = T^\dag \, \Prox \, Tx$.
Finally, $\Psi$ is convex since it is the concatenation of a convex function with a linear function.
\end{proof}

For $\Prox \coloneqq \prox_{g}$ 
with $g \in \Gamma_0(\HH)$,
we are actually able to explicitly compute 
$f \in \Gamma_0(\HH)$ such that $T^\dagger \, \Prox \, T = \prox_f$ on $\HT$.
Clearly, this also gives an alternative proof for Theorem~\ref{thm:Existence}.

\begin{thm}\label{thm:ExpForm}
Let $T \in \mathcal{B}(\HH,\KK)$ be an injective operator with closed range 
and $\Prox \coloneqq \prox_{g}$ for some $g \in \Gamma_0(\KK)$.
Then $T^{\dagger} \, \prox_{g} \, T \colon \HT \to \HT$ is the proximity operator on $\HT$ of $f \in \Gamma_0(\HH)$ given by
\begin{equation} \label{prox_expl}
f(x) 
\coloneqq 
g  \square \Bigl( \tfrac12 \| \cdot \|_{\KK}^2 + \iota_{\mathcal{N}(T^*)}  \Bigr)  (Tx),
\end{equation}
where $\mathcal{N}(T^*)$ denotes the kernel of $T^*$.
For bijective $T \in \mathcal{B}(\HH,\KK)$ this expression simplifies to 
$
f(x) =  g(Tx).
$
\end{thm}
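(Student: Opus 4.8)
The plan is to verify the explicit formula directly, by unfolding the definition of the proximity operator on $\HT$ and showing that the associated minimization collapses onto the one defining $\prox_g(Tx)$. Abbreviating $h \coloneqq \tfrac12\|\cdot\|_\KK^2 + \iota_{\mathcal{N}(T^*)}$, the infimal convolution reads $(g\square h)(v) = \inf_{z\in\mathcal{N}(T^*)}\bigl\{ g(v-z) + \tfrac12\|z\|_\KK^2 \bigr\}$, so that $f(y) = (g\square h)(Ty)$. Assuming for the moment that $f\in\Gamma_0(\HH)$ (verified at the end) and using $\|x-y\|_\HT = \|Tx-Ty\|_\KK$, the proximity operator of $f$ on $\HT$ is
\[
\argmin_{y\in\HH}\Bigl\{ \tfrac12\|Tx-Ty\|_\KK^2 + (g\square h)(Ty) \Bigr\}.
\]
Inserting the definition of $g\square h$ and merging the inner infimum over $z$ with the outer minimization turns this into a joint minimization over $(y,z)\in\HH\times\mathcal{N}(T^*)$ of $\tfrac12\|Tx-Ty\|_\KK^2 + g(Ty-z) + \tfrac12\|z\|_\KK^2$.

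Since $T$ is injective with closed range, $y\mapsto u\coloneqq Ty$ is a bijection from $\HH$ onto $\mathcal{R}(T)$ with inverse $T^\dagger$, reducing the problem to a minimization over $(u,z)\in\mathcal{R}(T)\times\mathcal{N}(T^*)$. The decisive step exploits the orthogonal splitting $\KK = \mathcal{R}(T)\oplus\mathcal{N}(T^*)$: putting $w\coloneqq u-z$ gives a bijection between such pairs $(u,z)$ and arbitrary $w\in\KK$, via $u = P_{\mathcal{R}(T)}w$ and $z = -(\mathrm{Id}-P_{\mathcal{R}(T)})w$. As $Tx-u\in\mathcal{R}(T)$ and $z\in\mathcal{N}(T^*)$ are orthogonal, we have $Tx-w = (Tx-u)+z$, so Pythagoras yields $\tfrac12\|Tx-u\|_\KK^2 + \tfrac12\|z\|_\KK^2 = \tfrac12\|Tx-w\|_\KK^2$, while $g(Ty-z)=g(w)$. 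The objective therefore reduces to $\tfrac12\|Tx-w\|_\KK^2 + g(w)$, whose unique minimizer over $\KK$ is $\hat w = \prox_g(Tx)$. Unwinding the substitutions through $\hat u = P_{\mathcal{R}(T)}\hat w$ and $\hat y = T^\dagger\hat u = T^\dagger P_{\mathcal{R}(T)}\hat w = T^\dagger\hat w$, where the last equality uses $T^\dagger = T^\dagger P_{\mathcal{R}(T)}$, gives $\hat y = T^\dagger\prox_g(Tx) = T^\dagger\,\prox_g\,T(x)$, as claimed.

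It remains to justify $f\in\Gamma_0(\HH)$. The function $h$ is supercoercive, being $+\infty$ off the closed subspace $\mathcal{N}(T^*)$ and quadratic on it; hence its conjugate $h^*$ is finite and continuous, so $g^*+h^*\in\Gamma_0(\KK)$, and by Fenchel duality $g\square h = (g^*+h^*)^*\in\Gamma_0(\KK)$ with attained infimum. Composing with the bounded injective $T$ then keeps $f=(g\square h)\circ T$ in $\Gamma_0(\HH)$. The bijective case is immediate, since there $\mathcal{N}(T^*)=\{0\}$ forces $z=0$ and $(g\square h)(v)=g(v)$, so $f(x)=g(Tx)$. I expect the main difficulty to lie not in any single estimate but in the careful bookkeeping across the three reductions — unfolding the infimal convolution, the $y\leftrightarrow u$ reparametrization, and the orthogonal $w=u-z$ splitting — and in ensuring the joint minimizer is genuinely attained and unique so that the $\argmin$ indeed equals $T^\dagger\prox_g(Tx)$; the lone analytic point, lower semicontinuity of the supercoercive infimal convolution, is what forces the Fenchel-duality detour rather than a direct one-line argument.
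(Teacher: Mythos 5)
Your proof is correct and is essentially the paper's own argument read in the opposite direction: the paper starts from $T^\dagger\,\prox_g(Tx)$ and unfolds it, via the orthogonal splitting $\KK=\mathcal{R}(T)\oplus\mathcal{N}(T^*)$, Pythagoras, and the substitution $z_1=Ty$, into $\argmin_{y}\bigl\{\tfrac12\|x-y\|_{\HT}^2+f(y)\bigr\}$, whereas you run the same chain of identities from the $f$-side back to $\prox_g(Tx)$. Your additional Fenchel--duality check that $f\in\Gamma_0(\HH)$ is a welcome supplement (the paper asserts this without proof), but it does not change the route.
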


\begin{proof}
Since $\mathcal{R}(T)$ is closed, we have the orthogonal decomposition 
$\HH = \mathcal{R}(T) \oplus \mathcal{N}(T^*)$.
Then, we get
\begin{align}
T^\dagger \,  \prox_{g} \, (T x) 
&=
T^\dagger \argmin_{z \in \KK} \Bigl\{ \tfrac12 \|z - Tx\|_{\KK}^2 +  g(z)\Bigr\}\\
&=
T^\dagger P_{\mathcal{R}(T)} \argmin_{z = z_1+z_2, z_1 \in \mathcal{R}(T), z_2 \in \mathcal{N}(T^*)}
\Bigl\{ \frac12 \|z_1 + z_2 - Tx\|_{\KK}^2 +  g(z_1 + z_2) \Bigr\}\\
&=
T^\dagger \argmin_{z_1 \in \mathcal{R}(T)} \inf_{z_2 \in  \mathcal{N}(T^*)}  
\Bigl\{ \frac12 \|z_1 - Tx\|_{\KK}^2 + \frac12 \|z_2\|_{\KK}^2 +  g(z_1 + z_2)  \Bigr\}\\
&= 
T^\dagger \argmin_{z_1 \in \mathcal{R}(T)} \left\{ \frac12 \|z_1 - Tx\|_{\KK}^2 
+ \inf_{z_2 \in  \mathcal{N}(T^*)} \Bigl\{ \frac12\|z_2\|_{\KK}^2 + g(z_1 + z_2)  \Bigr\} \right\}\\
&=
T^\dagger T \argmin_{y \in \HH} \left\{ 
\frac12 \|Ty - Tx\|_{\KK}^2 
+ 
\inf_{z_2 \in  \mathcal{N}(T^*)} 
\Bigl\{ \frac12\|z_2\|_{\KK}^2 + g(Ty + z_2) \Bigr\}
\right\}\\
&=
\argmin_{y \in \HH} 
\left\{ \frac12 \|y - x\|_\HT^2 + \inf_{z_2 \in  \mathcal{N}(T^*)} 
\Bigl\{ \frac12\|z_2\|_{\KK}^2 
+  g(Ty + z_2)  \Bigr\} \right\} \label{magic}\\
&= 
\argmin_{y \in \HH} \left\{ \frac12 \|y - x\|_\HT^2 + 
 g \square \Bigl( \tfrac12 \| \cdot \|_{\KK}^2 + \iota_{\mathcal{N}(T^*)}  \Bigr) (Ty) \right\},
\end{align}
where $f\square g(x) \coloneqq \inf_{y\in \HH } f(y) + g(x-y)$ 
denotes the infimal convolution of $f,g \in \Gamma_0(\HH)$ and $x \mapsto \iota_S(x)$ is the indicator function of the set $S$ taking the value $0$ if $x \in S$ and $+\infty$ otherwise.
Hence, we conclude that $T^{\dagger} \, \prox_{g} \, T$ 
is the proximity operator on $\HT$ of $f$ in \eqref{prox_expl}.
In particular, we conclude for bijective $T$ by \eqref{magic} that 
\[
T^{\dag} \, \prox_{g}  \, (T x) 
= 
\argmin_{y \in \HH} \Bigl\{
\frac12 \| x-y\|_\HT^2 +   g(Ty) 
\Bigr\}.
\]
\end{proof}

Note that, in general $f$ is  a weaker regularizer than $g$, i.e.,  $f \leq  g$. 
This is necessary since for the latter we would get using the same reasoning as in \eqref{magic}
\[
\argmin_{y \in \HH} \Bigl\{\frac12 \| x-y\|_T^2 +   g(Ty) \Bigr\} 
= T^\dagger \argmin_{z \in \KK} 
\Bigl\{ \frac12 \|z - Tx\|_\KK^2 + g(z) +  \iota_{{\cal R}(T)}(z)\Bigr\} \neq T^\dagger \prox_{g} (T x).
\]

\subsection{Frame Soft Shrinkage}

In this section, we investigate the frame soft shrinkage as a special proximity operator.
Let $\KK = \ell_2$ be the Hilbert space of quadratic summable sequences $c = \{c_k\}_{k \in \mathbb N}$ 
with norm $\|c \|_{\ell_2} \coloneqq ( \sum_{k \in \mathbb N} |c_k|^2)^{\frac12}$ and assume further that $\HH$ is separable.
A set $\{x_k\}_{k\in\N}$, $x_k \in \HH$ is called a frame of $\HH$, if there exist constants $0 < A \le B < \infty$ such that
for all $x \in \HH$,
\begin{equation} \label{frame}
A \|x\|_\HH^2 \le \sum_{k\in \N} |\langle x,x_k \rangle_\HH |^2 \le B \|x\|_\HH^2.
\end{equation}
Given a frame $\{x_k\}_{k\in\N}$  of $\HH$, 
the  corresponding analysis operator  $T \colon \HH \to\ell_2$ is defined as 
$$Tx=\{ \langle x,x_k \rangle_\HH\}_{k\in\N}, \quad x\in \HH.$$
Its adjoint $T^*\colon\ell_2 \to \HH$ is the synthesis operator given by 
$$T^*\{c_k\}_{k\in\N} = \sum_{k\in\N}  c_k x_k, \quad \{c_k\}_{k\in\N} \in\ell_2.$$ 
By composing $T$ and $T^*$, we get the frame operator 
$$T^*Tf = \sum_{k\in\N} \langle x , x_k \rangle_\HH x_k, \quad x\in \HH. $$
The frame operator $T^*T$ is invertible on $\HH$, see \cite{CB2016}, such that
$$f = \sum_{k\in\N} \langle x , x_k \rangle (T^*T)^{-1} x_k, \quad x\in \HH. $$
The sequence $\{ (T^*T)^{-1}x_k\}_{k\in\N}$ is called the canonical dual frame of $\{ x_k\}_{k\in\N}$. 
Note that $T^\dagger$ is indeed the synthesis operator for the canonical dual frame of $\{ f_k\}_{k\in\N}$.
The relation between linear, bounded, injective operators of closed range and frame analysis operators reads as follows:

\begin{proposition} \label{prop:1} 
	An operator $T \in \mathcal{B}(\HH,\ell_2)$ is injective and has closed range if and only if it is the analysis operator of some frame of $\HH$. 
\end{proposition}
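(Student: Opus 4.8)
The plan is to read the frame inequality \eqref{frame} directly as a two-sided bound on the analysis operator. Since $\|Tx\|_{\ell_2}^2 = \sum_{k\in\N}|\langle x,x_k\rangle_\HH|^2$, the frame condition \eqref{frame} says precisely that
\[
A\|x\|_\HH^2 \le \|Tx\|_{\ell_2}^2 \le B\|x\|_\HH^2 \quad \text{for all } x \in \HH.
\]
The whole proof then rests on the standard equivalence, for bounded operators between Hilbert spaces, between being injective with closed range and being bounded below. The lower frame bound encodes exactly the latter property.

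For the direction where $T$ is assumed to be the analysis operator of a frame, I would use the lower bound $\sqrt{A}\,\|x\|_\HH \le \|Tx\|_{\ell_2}$. This gives injectivity at once, since $Tx=0$ forces $x=0$. For closedness of $\mathcal{R}(T)$, I would take any sequence with $Tx_n$ convergent in $\ell_2$; the estimate $\|x_n-x_m\|_\HH \le A^{-1/2}\|Tx_n-Tx_m\|_{\ell_2}$ shows that $(x_n)$ is Cauchy in $\HH$, hence convergent to some $x$, and continuity of $T$ yields $Tx=\lim_n Tx_n \in \mathcal{R}(T)$.

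For the converse, suppose $T\in\mathcal{B}(\HH,\ell_2)$ is injective with closed range. I would define $x_k \coloneqq T^* e_k$, where $\{e_k\}_{k\in\N}$ is the standard orthonormal basis of $\ell_2$. Then for every $x\in\HH$,
\[
\langle x, x_k\rangle_\HH = \langle x, T^* e_k\rangle_\HH = \langle Tx, e_k\rangle_{\ell_2} = (Tx)_k,
\]
so that $Tx=\{\langle x,x_k\rangle_\HH\}_{k\in\N}$ and $T$ is the analysis operator of $\{x_k\}_{k\in\N}$. The upper frame bound is then immediate from boundedness, $\sum_{k\in\N}|\langle x,x_k\rangle_\HH|^2 = \|Tx\|_{\ell_2}^2 \le \|T\|^2\|x\|_\HH^2$, so one may take $B=\|T\|^2$.

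The lower frame bound is the only step that uses the hypotheses essentially, and I expect it to be the crux. Because $T$ is injective with closed range, the Moore–Penrose inverse $T^\dagger\in\mathcal{B}(\ell_2,\HH)$ exists and satisfies $T^\dagger T = \text{Id}$, as recalled at the start of Section~\ref{sec:main}. Hence $\|x\|_\HH = \|T^\dagger T x\|_\HH \le \|T^\dagger\|\,\|Tx\|_{\ell_2}$, which rearranges to $\|Tx\|_{\ell_2}^2 \ge \|T^\dagger\|^{-2}\|x\|_\HH^2$, so $A=\|T^\dagger\|^{-2}$ completes the frame inequality. Equivalently, one could invoke the open mapping theorem directly: $T$ is a bounded bijection onto the (closed, hence complete) subspace $\mathcal{R}(T)$, so its inverse there is bounded, which again produces the lower bound. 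Either way, $\{x_k\}_{k\in\N}$ is a frame whose analysis operator is $T$.
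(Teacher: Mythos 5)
Your proof is correct and follows essentially the same route as the paper: the converse uses the identical construction $x_k \coloneqq T^*\delta_k$ together with the identity $\sum_{k}|\langle x,x_k\rangle_\HH|^2=\|Tx\|_{\ell_2}^2$, and the forward direction rests on the standard equivalence between the lower frame bound and being bounded below. The only difference is that you spell out the details (the Cauchy-sequence argument for closedness of $\mathcal{R}(T)$ and the bound $A=\|T^\dagger\|^{-2}$) which the paper delegates to a citation or leaves implicit.
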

\begin{proof}
	If $T$ is the analysis operator for a frame $\{x_k\}_{k\in\N}$, 
	then  $T$ is bounded, injective and has closed range, see \cite{CB2016}.
	Conversely, assume that $T \in \mathcal{B}(\HH,\ell_2)$ is injective and that $\mathcal{R}(T)$ is closed.
	By the closed range theorem, it holds $\mathcal{R}(T^*) = H$.
	Let $\{\delta_k\}_{k\in\N}$ be the canonical basis of $\ell_2$ 
	and set 
	$\{x_k \}_{k\in\N}\coloneqq \{T^{*} \delta_k\}_{k\in\N}$. 
	Since $\sum_{k\in \N} |\langle x,x_k \rangle_\HH |^2 = \Vert Tx \Vert_{\ell_2}^2$, we conclude that $\{x_k \}_{k\in\N}$ is a frame of $\HH$ and $T$ is the corresponding analysis operator.
\end{proof}

The soft shrinkage operator $S_\lambda$ on $\ell_2$ (applied componentwise)  
is the proximity operator corresponding to the function $g \coloneqq \lambda \| \cdot \|_1$, $\lambda>0$.
Then we obtain as immediate consequence of Theorem~\ref{thm:ExpForm} 
the following corollary.

\begin{corollary}\label{cor:f1}
Let $T\colon \HH \rightarrow \ell_2$ be an analysis frame operator and $\Prox\colon \ell_2 \to \ell_2$ an arbitrary proximity operator.
Then $T^{\dagger} \, \Prox \, T$ is itself a proximity operator on $\HH$ equipped with the norm
$\| \cdot \|_\HT$. In particular, it holds for $\Prox \coloneqq S_\lambda$, $\lambda >0$  that
$$
T^{\dagger} \, S_\lambda \, T (x) = \argmin_{y \in \HH} \left\{ \|x-y\|_\HT^2 +  f(y)\right\}, \quad 
f(y) \coloneqq \lambda \|\cdot\|_1 \square \Bigl( \tfrac12 \| \cdot \|_{\ell_2}^2 + \iota_{\mathcal{N}(T^*)}  \Bigr) (Ty).
$$
\end{corollary}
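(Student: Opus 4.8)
The plan is to deduce both assertions directly from the results established above, so that essentially no new computation is required. First I would invoke Proposition~\ref{prop:1}: since $T$ is the analysis operator of a frame of $\HH$, it is an injective operator in $\mathcal{B}(\HH,\ell_2)$ with closed range $\mathcal{R}(T)$. Taking $\KK = \ell_2$ thus places us exactly in the hypotheses of Theorem~\ref{thm:Existence}. Applying that theorem to the given proximity operator $\Prox\colon \ell_2 \to \ell_2$ immediately yields that $T^\dagger \, \Prox \, T \colon \HT \to \HT$ is a proximity operator on $\HH$ equipped with the norm $\| \cdot \|_\HT$, which settles the general claim.

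For the explicit form in the special case $\Prox = S_\lambda$, I would use the identification of the componentwise soft shrinkage $S_\lambda$ on $\ell_2$ as the proximity operator of $g \coloneqq \lambda \| \cdot \|_1$, so that $S_\lambda = \prox_g$. With this in hand, the claim becomes a direct instance of Theorem~\ref{thm:ExpForm} applied with $\KK = \ell_2$: the operator $T^\dagger \, \prox_g \, T = T^\dagger \, S_\lambda \, T$ is the proximity operator on $\HT$ of
\[
f(y) = g \, \square \Bigl( \tfrac12 \| \cdot \|_{\ell_2}^2 + \iota_{\mathcal{N}(T^*)} \Bigr)(Ty) = \lambda \|\cdot\|_1 \, \square \Bigl( \tfrac12 \| \cdot \|_{\ell_2}^2 + \iota_{\mathcal{N}(T^*)} \Bigr)(Ty),
\]
i.e.\ $T^\dagger \, S_\lambda \, T(x) = \argmin_{y \in \HH}\bigl\{ \tfrac12\|x-y\|_\HT^2 + f(y)\bigr\}$, which is precisely the displayed expression.

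The one point that is not purely formal --- and hence the place I expect the main, if modest, obstacle --- is the identification $S_\lambda = \prox_g$ on $\ell_2$ that makes Theorem~\ref{thm:ExpForm} applicable. This first requires $g = \lambda \| \cdot \|_1 \in \Gamma_0(\ell_2)$. In contrast to the finite-dimensional situation of Example~\ref{ex:1}, the $\ell_1$-norm is not finite on all of $\ell_2$, so $g$ takes the value $+\infty$ on sequences that are square- but not absolutely summable. It is nevertheless proper (being finite, for instance, on finitely supported sequences), convex, and $\ell_2$-lower semicontinuous, the latter because $g$ is the pointwise supremum of the $\ell_2$-continuous partial sums $c \mapsto \sum_{k \le N} |c_k|$; hence $g \in \Gamma_0(\ell_2)$ as needed. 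One then checks, as noted in the text preceding the corollary, that the minimization defining $\prox_g$ decouples across coordinates and reduces on each coordinate to the scalar soft thresholding of Example~\ref{ex:1}, giving $\prox_g = S_\lambda$. Once these facts are in place, the corollary follows immediately by specializing Theorems~\ref{thm:Existence} and~\ref{thm:ExpForm}, with no further argument required.
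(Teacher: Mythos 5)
Your proposal is correct and follows essentially the same route as the paper, which likewise presents the corollary as an immediate consequence of Proposition~\ref{prop:1} (identifying frame analysis operators with injective closed-range operators) together with Theorems~\ref{thm:Existence} and~\ref{thm:ExpForm} applied to $g = \lambda\|\cdot\|_1$ with $S_\lambda = \prox_g$. Your additional verification that $\lambda\|\cdot\|_1 \in \Gamma_0(\ell_2)$ despite being $+\infty$ on part of $\ell_2$ is a sound refinement of a point the paper leaves implicit.
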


Finally, let us have a look at the finite dimensional setting
with $\HH \coloneqq \R^d$, $\KK \coloneqq \R^d$, $n\ge d$.
Then we have for any $T \in \R^{n,d}$ with full rank $d$ and an arbitrary proximity operator on $\R^n$ that
\begin{equation} \label{rechnen}
T^{\dagger} \, \Prox \, T (x) = \argmin_{y \in \R^d} \left\{ \frac12 \|x-y\|_T^2 + f(y) \right\}, \quad 
f(y) \coloneqq \lambda \|\cdot\|_1 \square \Bigl( \tfrac12 \| \cdot \|_{\ell_2}^2 + \iota_{\mathcal{N}(T^\tT)}  \Bigr) (Ty).
\end{equation}

\begin{example} \label{ex:2}
We want to compute $f$ for the matrix $T\colon \R^{1} \to \R^{2}$ 
given by $T = \begin{pmatrix} 1 & 2\end{pmatrix}^\tT$ and and the soft shrinkage operator $S_1$ on $\R^2$.
Note that this example was also considered in \cite{{GP2019}}.
By \eqref{rechnen} and since  $x = (x_1, x_2)^\tT \in \mathcal{N}(T^*)$ 
if and only if $x_1 = -2 x_2$ we obtain
\begin{align}
f(y) 
& =  \Vert \cdot \Vert_1 \square \Bigl( \frac12 \| \cdot \|_2^2 + \iota_{\mathcal{N}(T^*)} (\cdot) \Bigr)  (Ty)\\
&= \min_{Ty = z+x}  \left\{ \|z\|_1 +  \tfrac12 \| x \|_{\ell_2}^2 + \iota_{\mathcal{N}(T^\tT)}(x) \right\} 
=  \min_{x}  \left\{ \|Ty-x\|_1 +  \tfrac12 \| x \|_{\ell_2}^2 + \iota_{\mathcal{N}(T^\tT)}(x) \right\}\\
& = \min_{x \in \R^2} \bigl\Vert 
(y , 2y )^\tT - ( x_1, x_2)^\tT \bigr\Vert_1 + \frac{1}{2} \Vert x \Vert_2^2 + \iota_{N(T^\tT)}(x)\\
& =  \min_{x_2 \in \R}
\vert y +2x_2 \vert + \vert2y-x_2 \vert + \frac{5}{2} x_2^2.
\end{align}
Consider the strictly convex function  $g_y(x_2) = \vert y +2x_2 \vert + \vert2y-x_2 \vert + \frac{5}{2} x_2^2$.
For $\vert y \vert \leq \frac{2}{5}$, it holds
\begin{equation}
0 \in \partial_{x_2}  g_y \left(-\frac{y}{2} \right) = [-2,2] - \mathrm{sgn}(y) -\frac{5}{2} y.
\end{equation}
Hence, by Fermat's theorem, the unique minimizer of $g_y(x_2)$  is given by $-\frac{y}{2}$.
Consequently, we have for $\vert y \vert \leq \frac{2}{5}$ that
\begin{equation}
f(y) = \frac{5}{2} \vert y \vert + \frac{5}{8} y^2.
\end{equation}
For $\vert y \vert > \frac{2}{5}$, the function $g_y$ is differentiable in $-\frac15 \mathrm{sgn}(y)$ and it holds
\begin{equation}
\partial_{x_2}  g_y(-\frac15 \mathrm{sgn}(y)) = 2\mathrm{sgn}(y) - \mathrm{sgn}(y) -\mathrm{sgn}(y) = 0.
\end{equation}
Therefore, for $\vert y \vert > \frac{2}{5}$, the minimizer of $g_y$ is $-\frac15 \mathrm{sgn}(y)$ and
\begin{equation}
f(y) = 3\vert y \vert - \frac{1}{10}.
\end{equation}
\end{example}

\subsection*{Acknowledgments} 
We like to thank J.-C. Pesquet for pointing us to \cite{Combettes2018}, we were not aware of when writing this paper.
It appears that Remark 3.10 iv) in \cite{Combettes2018} can be somehow reformulated towards our setting.
Funding by the German Research Foundation (DFG) within the project STE 571/13-1, the RTG 1932 and the RTG 2088 is gratefully acknowledged.

\bibliographystyle{abbrv}
\bibliography{references}
\end{document}